\crefname{section}{Section}{Sections}
\crefname{subsection}{\S}{\S\S}
\theoremstyle{plain}
\newtheorem{lemma}{Lemma}[section]
\newtheorem{proposition}[lemma]{Proposition}
\newtheorem{corollary}[lemma]{Corollary}
\newtheorem{theorem}[lemma]{Theorem}
\theoremstyle{nonumberplain}
\theoremstyle{plain}
\newtheorem{definition}[lemma]{Definition}
\newtheorem{example}[lemma]{Example}
\newtheorem{remark}[lemma]{Remark}
\crefname{definition}{definition}{definitions}
\crefname{ex}{example}{examples}
\crefname{remark}{remark}{remarks}
\crefname{convention}{convention}{conventions}
\crefname{lemma}{lemma}{lemmas}
\crefname{proposition}{proposition}{propositions}
\crefname{corollary}{corollary}{corollaries}
\crefname{theorem}{theorem}{theorems}
\crefname{assumption}{assumption}{Assumptions}
\crefname{equation}{}{}
\theoremstyle{nonumberplain}
\newtheorem{proof}{Proof}
\numberwithin{equation}{section}
\newcommand\bC{{\mathbb C}}
\newcommand\bR{{\mathbb R}}
\newcommand\bS{{\mathbb S}}
\newcommand\bZ{{\mathbb Z}}
\DeclareMathOperator{\id}{id}
\newcommand{\qedhere}{\mbox{}\hfill\ensuremath{\blacksquare}}
\title{Residual finiteness for central pushouts}
\author{Alexandru Chirvasitu}
\begin{document}

\date{}

\newcommand{\Addresses}{{
  \bigskip
  \footnotesize

  \textsc{Department of Mathematics, University at Buffalo, Buffalo,
    NY 14260-2900, USA}\par\nopagebreak \textit{E-mail address}:
  \texttt{achirvas@buffalo.edu}

}}

\maketitle

\begin{abstract}
  We prove that pushouts $A*_CB$ of residually finite-dimensional (RFD) $C^*$-algebras over central subalgebras are always residually finite-dimensional provided the fibers $A_p$ and $B_p$, $p\in \mathrm{spec}~C$ are RFD, recovering and generalizing results by Korchagin and Courtney-Shulman. This then allows us to prove that certain central pushouts of amenable groups have RFD group $C^*$-algebras. Along the way, we discuss the problem of when, given a central group embedding $H\le G$, the resulting $C^*$-algebra morphism is a continuous field: this is always the case for amenable $G$ but not in general.
\end{abstract}

\noindent {\em Key words: $C^*$-algebra, amenable group, pushout, residually finite, residually finite-dimensional, Fell topology}

\vspace{.5cm}

\noindent{MSC 2010: 46L09; 20E26; 22D10; 18A30}


\section*{Introduction}

Residual finiteness properties have elicited considerable interest, both in the operator algebra literature and in group theory. On the operator-algebraic side one typically considers {\it residually finite-dimensional} (henceforth RFD) $C^*$-algebras, i.e. those whose elements are separated by representations on finite-dimensional Hilbert spaces. The group-theoretic analogue is the concept of a {\it residually finite} (or RF) group, i.e. one whose finite-index normal subgroups intersect trivially (i.e. having ``enough'' finite quotients).

The literature on RFD $C^*$-algebras is rather substantial, as is that on RF groups; so much so, in fact, that it would be impossible to do it justice. For samplings (the best a short note such as this one can do) we direct the reader to, say, \cite{el,MR1301006,bd-popa,adel,MR3250050,krch,ls-rfd,cs,shl-am} (for RFD $C^*$-algebras) and \cite{baums,MR484178,MR311776} or \cite[\S 6.5]{mks}, \cite[Chapters 6, 14, 15]{rob-gp}, \cite[Chapter 2]{csc-cell} (for RF groups), and references therein.

We are concerned here with the types of ``permanence'' properties for residual finiteness as studied, say, in \cite{bd-popa,adel,krch,ls-rfd,cs,shl-am,baums}, to the effect that various types of pushouts (also known as amalgamated free products) of RFD or RF objects are again such. Specifically, the main result of \cite{krch} is that pushouts of separable commutative $C^*$-algebras are RFD. More generally, the main theorem of \cite{cs} proves this for central pushouts $A*_CB$ with $A$ and $B$ separable and {\it strongly} RFD, i.e. such that all of their quotients are RFD.

The present note is motivated in part by the desire to recover these results without the separability and strong RFD-ness assumptions.


The preliminary \Cref{se:prel} recalls some background and sets conventions.

In the short \Cref{se:cntr-alg} we prove \Cref{th:main}, stating that central pushouts of RFD $C^*$-algebras with RFD fibers are RFD.

\Cref{se:dense} is devoted to the problem of when (or whether), given a central group embedding $H\le G$, the resulting embedding $C^*(H)\to C^*(G)$ is a continuous field over $\mathrm{spec}~C^*(H)$ in the sense of \Cref{def:contfld} below. This holds for amenable groups (\Cref{th:amnb-cont}), but not, for instance, for property-(T) groups (\Cref{ex:t-dense}).

\Cref{se:rfgps} centers around the variant of \cite[Theorem 6.9]{shl-am} obtained in \Cref{th:new69}. The latter proves that (the full group $C^*$-algebra of) $G_1*_HG_2$ is RFD provided $G_i$ are amenable residually finite (RF) and $H\le G_i$ is a common central subgroup such that the quotients $G_i/H$ are RF. The former result, on the other hand, assumes that $G_1*_HG_2$ itself is RF.

Although \Cref{th:new69} is formally stronger for that reason, we nevertheless show in \Cref{pr:gicrf} that its hypotheses imply the residual finiteness of the pushout $G_1*_HG_2$. \cite[Theorem 6.9]{shl-am} and \Cref{th:new69} are thus equivalent, albeit non-obviously.

\subsection*{Acknowledgements}

This work was partially supported by NSF grants DMS-1801011 and DMS-2001128. 

I am grateful for much valuable input from Tatiana Shulman, Manuel L. Reyes and Amaury Freslon, as well as the anonymous referee's insightful suggestions. 

\section{Preliminaries}\label{se:prel}

\subsection{Fields of $C^*$-algebras}\label{subse:flds}

All $C^*$-algebras and pushouts are assumed unital. Pushouts over $\bC$ are undecorated, i.e. $A*B$ denotes what in the literature (e.g. \cite{el}) is sometimes referred to as $A*_{\bC}B$. Given a $C^*$ morphism $C\to A$ with $C=C(X)$ commutative, we denote by $A_p$, the {\it fiber} of $A$ at the point $p$ of the spectrum $X$ of $C$:
\begin{equation*}
  A_p:=A/\langle \ker p\rangle,
\end{equation*}
where $p\in X$ is regarded as a character $p:C(X)\to \bC$ and angled brackets denote the ideal generated by the respective set. Similarly, for $a\in A$ we denote by $a_p$ its image through the surjection $A\to A_p$.

We will refer to a central $C^*$ morphism $C\to A$ as a {\it $C$-algebra} $A$ (e.g. \cite[Definition 1.5]{kasp-nov} or \cite[D\'efinition 2.6]{blnch}). Following standard practice (see \cite[D\'efinition 3.1]{blnch} for instance), we have

\begin{definition}\label{def:contfld}
  A $C$-algebra $A$ constitutes a {\it continuous field of $C^*$-algebras} if for every $A$ the norm function
  \begin{equation*}
    \mathrm{spec}~C\ni p\mapsto \|a\|_{A_p}
  \end{equation*}
  is continuous. 
\end{definition}

The function is known to always be {\it upper} semicontinuous, for example by \cite[Proposition 1.2]{rieff-flds}.

\subsection{Group filtrations}\label{subse:fltr}

Following \cite[\S 2.2]{baums}, we need the following notion applicable to a residually finite group $G$.

\begin{definition}\label{def:filt}
  A {\it filtration} on $G$ is a family of finite-index normal subgroups $N_{\alpha}\trianglelefteq G$ with trivial intersection.

  Let $H\le G$ be a subgroup. A filtration $\{N_{\alpha}\}$ of $G$ is {\it an $H$-filtration} provided
  \begin{equation*}
    \bigcap_{\alpha}HN_{\alpha} = H. 
  \end{equation*}
\end{definition}

\section{Pushouts over central subalgebras}\label{se:cntr-alg}

The main result of this section is 

\begin{theorem}\label{th:main}
  Let $A$ and $B$ be two $C^*$-algebras, $C\subseteq A,B$ central $C^*$-embeddings, and assume all corresponding fibers $A_p$ and $B_p$ are RFD for
  \begin{equation*}
    p\in X:=\mathrm{spec}~C.
  \end{equation*}
  Then, $M:=A*_CB$ is RFD.  
\end{theorem}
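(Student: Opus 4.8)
The plan is to reduce the statement to a finite-dimensional separation problem fibrewise, then patch the fibrewise representations together into a representation of the pushout $M$. Fix a nonzero element $m \in M$; we must produce a finite-dimensional representation $\pi$ of $M$ with $\pi(m) \neq 0$. The first step is to control the central part: the copy of $C = C(X)$ sits centrally in $M$ (via $C \to A \to M$, which agrees with $C \to B \to M$), so for each character $p \in X$ we may form the fiber $M_p := M/\langle \ker p\rangle$. The key structural observation is that $M_p$ is itself a pushout over $\bC$, namely $M_p \cong A_p *_{\bC} B_p$, because quotienting the universal pushout by the central ideal generated by $\ker p$ simply quotients each leg. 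Since $A_p$ and $B_p$ are RFD and any unital free product (pushout over $\bC$) of RFD $C^*$-algebras is again RFD — this is the classical result of Exel–Loring, \cite{el} — each fiber $M_p$ is RFD.

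Next I would argue that separating $m \in M$ reduces to separating some $m_p \in M_p$. Here the natural tool is upper semicontinuity of $p \mapsto \|m_p\|_{M_p}$, noted after \Cref{def:contfld}, together with the standard fact that for a $C(X)$-algebra the norm $\|m\|_M = \sup_{p \in X} \|m_p\|_{M_p}$. Thus $m \neq 0$ forces $\|m_p\|_{M_p} > 0$ for at least one $p$, and since $M_p$ is RFD there is a finite-dimensional representation $\sigma \colon M_p \to B(\cH)$ with $\sigma(m_p) \neq 0$. Composing with the quotient map $M \to M_p$ yields a finite-dimensional representation of $M$ not killing $m$. As $m$ was arbitrary, the finite-dimensional representations separate points of $M$, i.e. $M$ is RFD.

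The main obstacle I anticipate is justifying the isomorphism $M_p \cong A_p *_{\bC} B_p$ rigorously, i.e. checking that forming the fiber commutes with the pushout. This is a universal-property argument: a unital $*$-homomorphism out of $A_p *_{\bC} B_p$ is the same as a compatible pair out of $A_p$ and $B_p$, which is the same as a compatible pair out of $A$ and $B$ that annihilates $\langle \ker p \rangle$ in each leg; and since the images of $C$ in $A$ and in $B$ are identified in $M$, the ideal $\langle \ker p \rangle \subseteq M$ is generated by the common image, so factoring through it is exactly the condition that the pair descends to the fibers. One must be slightly careful that the ideal of $M$ generated by $\ker p$ coincides with the closure of $A(\ker p)A + B(\ker p)B$ and that this is what gets quotiented; centrality of $C$ makes this routine but it deserves a sentence. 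A secondary point worth stating explicitly is that $\|m\|_M = \sup_p \|m_p\|$ for the central $C(X)$-structure on $M$ — this is standard for $C(X)$-algebras (the intersection of the ideals $\langle \ker p\rangle$ over $p \in X$ is zero), but it is the linchpin that converts "some fiber is nonzero" back into a statement about $M$ itself.
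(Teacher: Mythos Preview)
Your proposal is correct and follows essentially the same route as the paper: reduce to the fibers $M_p\cong A_p*B_p$ via the norm identity $\|m\|=\sup_{p}\|m_p\|$ (which the paper cites as \cite[Proposition 2.8]{blnch}), and then invoke Exel--Loring \cite{el} to see each fiber is RFD. Your universal-property justification of $M_p\cong A_p*B_p$ is in fact more detailed than what the paper records.
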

\begin{proof}
  Because $C$ is central in $A$ and $B$ it is central in $M$. Note the isomorphism
  \begin{equation*}
    M_p\cong A_p*_{C_p}B_p\cong A_p*B_p. 
  \end{equation*}
  For $m\in M$ we have
  \begin{equation}\label{eq:1}
    \|m\| = \sup_{p\in X}\|m_p\|_{M_p}.
  \end{equation}
  by \cite[Proposition 2.8]{blnch} and hence we can approximate the norm of $m$ arbitrarily well with the norms of its images through representations of $M_p\cong A_p*B_p$ as $p$ ranges over $X$. By the RFD-ness assumption on $A_p$ and $B_p$, their coproduct $A_p*B_p$ is again RFD by \cite[Theorem 3.2]{el}. This finishes the proof.
\end{proof}

\begin{remark}\label{re:nonz}
  Note that in fact, in the proof above one does not need the precise norm estimate \Cref{eq:1}: all that is needed is that every $0\ne m\in M$ be non-zero in some quotient $M\to M_p$, $p\in X$.
\end{remark}



\begin{remark}
  The conclusion of \Cref{th:main} cannot hold without the RFD-fiber assumption, as shown in \cite[Proposition 8.3]{shl-am}. That result relies on the construction in \cite{abels} of an amenable RF group $\Gamma$ with a central subgroup $N<\Gamma$ such that $\Gamma/N$ is not RF, and in that context proves that the pushout $C^*(\Gamma)*_{C^*(N)}C^*(\Gamma)$ is not RFD.

  We can see directly, in this case, that the fiber
  \begin{equation*}
    C^*(\Gamma)_p\cong C^*(\Gamma/N)
  \end{equation*}
  at the point $p\in \mathrm{spec}~C^*(N)$ corresponding to the trivial morphism $N\to \{1\}$ is not RFD. Indeed, since $\Gamma/N$ is finitely generated (finitely presented, even) by \cite{abels}, the RFD-ness of $C^*(\Gamma/N)$ would entail the residual finiteness of $\Gamma/N$ (e.g. by \cite[Proposition 2.4]{shl-am}), yielding a contradiction.
\end{remark}

\section{Fibers over dense sets}\label{se:dense}

It will be useful, in working with group $C^*$-algebras, to allow the points $p\in X$ from \Cref{th:main} to range only over a dense subset $Y\subseteq X:=\mathrm{spec}~C$ rather than the entire spectrum. We cannot do this in full generality, as \Cref{eq:1} does not hold for the supremum over only a dense subset $Y\subseteq X$:

\begin{example}\label{ex:cb}
  Let $I=[0,1]$, $M=C_b(I)$ (all bounded functions) and $C=C(I)\subset M$ (continuous functions), both equipped with the supremum norm. The indicator function $m$ of $\{0\}\subset I$ is non-zero, even though its image in every fiber
  \begin{equation*}
    M_p,\ p\in Y:=(0,1]\subset I
  \end{equation*}
  vanishes. For that reason, we do {\it not} have
  \begin{equation*}
    \|m\| = \sup_{p\in Y}\|m_p\|_{M_p}.
  \end{equation*}
\end{example}

What went wrong in the above example is that $M$ was not a continuous field over $C$ in the sense of \Cref{def:contfld}. One might hope that the pathology in \Cref{ex:cb} is at least avoidable for {\it group} $C^*$-algebras; this is not the case, as \Cref{ex:t-dense} below shows. Nevertheless, for amenable groups one can do better (see also \cite[Theorem 3.7]{bl-fr} for a broader discussion of the continuity of pushout fields).

\begin{theorem}\label{th:amnb-cont}
  Let $G_i$, $i=1,2$ be two amenable groups and $H\le G_i$ a common central subgroup. Then,
  \begin{equation*}
    C^*(G_1*_HG_2)\cong C^*(G_1)*_{C^*(H)} C^*(G_2)
  \end{equation*}
  is a continuous field of $C^*$-algebras over $\widehat{H}=\mathrm{spec}~C^*(H)$ in the sense of \Cref{def:contfld}.
\end{theorem}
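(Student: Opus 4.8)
The plan is to reduce the continuity of the norm function $\widehat{H}\ni p\mapsto \|m_p\|_{M_p}$ for $M=C^*(G_1*_HG_2)$ to a statement about the group algebras, and then exploit amenability to identify the fibers with full group $C^*$-algebras of quotient groups whose norms vary continuously. Concretely, for a character $p\in\widehat{H}$ the fiber $M_p$ is $C^*(G_1)_p *_{C^*(H)_p} C^*(G_2)_p \cong C^*(G_1)_p * C^*(G_2)_p$, and the fiber $C^*(G_i)_p$ is the $C^*$-algebra generated by the image of $G_i$ in the representation obtained by twisting: it is the completion of $\bC[G_i]$ in which $H$ acts by the scalars prescribed by $p$. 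Since $G_i$ is amenable, this twisted completion is just the $C^*$-algebra of the quotient group $G_i/\ker(p|_H)$-with-a-twist, and crucially there is no gap between reduced and full norms, so the fiber norm is computed by \emph{all} unitary representations of $G_i$ sending $h\in H$ to $p(h)$.

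The key steps, in order: (1) Recall that the norm function is always upper semicontinuous (cited from \cite[Proposition 1.2]{rieff-flds}), so only lower semicontinuity needs proof: given $p_0$ and $\varepsilon>0$, produce a neighborhood $U\ni p_0$ with $\|m_p\|_{M_p}>\|m_{p_0}\|_{M_{p_0}}-\varepsilon$ for $p\in U$. (2) Fix $m$ supported on finitely many group elements of $G_1*_HG_2$ (density of the group algebra reduces to this case) and pick a representation $\pi_0$ of $M_{p_0}$, coming from representations $\sigma_i$ of $G_i$ agreeing on $H$ via $p_0$, that nearly achieves $\|m_{p_0}\|$. (3) Use amenability of $G_i$ to perturb: because amenable group $C^*$-algebras are nuclear and their norms are ``continuous in the central character,'' one can deform $\sigma_i$ to a nearby representation $\sigma_i^{(p)}$ with $\sigma_i^{(p)}(h)=p(h)$ for $h$ in a chosen finite generating-type set, depending continuously (in the relevant weak sense) on $p$ near $p_0$; concretely, multiply the given representation by the one-dimensional perturbation implementing the change of central character on the relevant finitely many $H$-elements, which is possible because on an amenable group every such ``partial character'' extends and the Fell topology on $\widehat{G_i}$ makes matrix coefficients vary continuously. (4) Amalgamate $\sigma_1^{(p)}$ and $\sigma_2^{(p)}$ to a representation of $M_p$ and estimate $\|\pi_p(m_p)\|$ from below by continuity of the finitely many matrix coefficients involved, getting the required lower bound on a neighborhood.

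The main obstacle I expect is step (3): making the perturbation of the representations $\sigma_i$ genuinely continuous in $p$ while preserving the constraint $\sigma_i^{(p)}|_H = p$. The naive approach of ``tensoring with a character of $H$'' only works if the chosen character of $H$ extends to $G_i$, which need not happen. The right tool is likely the fact that for amenable $G$ the map $\widehat{G}\to\widehat{H}$ (or rather the induced map on the relevant $C^*$-algebras) is open, equivalently that $C^*(G)$ is a continuous field over $C^*(H)$ when $H$ is central and $G$ amenable — this is presumably established as an intermediate step or is folklore derivable from nuclearity plus the fact that for amenable groups $\mathrm{Prim}\,C^*(G)$ fibers continuously over $\widehat{H}$. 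If that single-group continuous-field statement is in hand, then \Cref{th:main}'s technology (isomorphism $M_p\cong A_p*B_p$) combined with the general principle that a pushout of continuous fields over a common $C(X)$ is a continuous field — this is exactly the content of \cite[Theorem 3.7]{bl-fr} cited in the text — gives the result immediately. So the cleanest route is: first prove (or cite) that $C^*(G_i)$ is a continuous field over $\widehat{H}$ using amenability, then invoke \cite[Theorem 3.7]{bl-fr} (or reprove the pushout-of-continuous-fields fact directly via \cite[Proposition 2.8]{blnch} applied fiberwise together with upper semicontinuity) to conclude that $C^*(G_1)*_{C^*(H)}C^*(G_2)$ is a continuous field.
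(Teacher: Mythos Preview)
Your overall architecture is sound: reduce to lower semicontinuity, then reduce the pushout case to the single-group case (that $C^*(G_i)$ is a continuous field over $\widehat{H}$), the latter reduction being exactly what \cite[Theorem 3.7]{bl-fr} buys. But the heart of the matter is the single-group statement, and there your proposal has a genuine gap.

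Your concrete suggestion in step~(3)---``multiply the given representation by the one-dimensional perturbation implementing the change of central character \ldots\ which is possible because on an amenable group every such `partial character' extends''---is false. Take $G$ the integer Heisenberg group and $H=Z(G)\cong\bZ$: then $H=[G,G]$, so no nontrivial character of $H$ extends to a character of $G$. Amenability does not help here. You seem to sense this, since you then retreat to ``the right tool is likely the fact that \ldots\ $C^*(G)$ is a continuous field over $C^*(H)$ when $H$ is central and $G$ amenable---this is presumably established as an intermediate step or is folklore derivable from nuclearity.'' But that \emph{is} the thing to be proved, and it does not fall out of nuclearity alone; you have not supplied the mechanism.

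The paper's missing ingredient is an \emph{induction-and-tensoring} trick. Given $\rho_i$ a representation of $G_i$ with central character $p_0$, and a net $p_\lambda\to p_0$, set $q_\lambda:=p_\lambda p_0^{-1}\to 1\in\widehat{H}$. One does not extend $q_\lambda$ to a character of $G_i$; instead one \emph{induces} it, forming $\mathrm{Ind}_H^{G_i}q_\lambda$. Amenability of $G_i$ (hence of the homogeneous space $G_i/H$, in Bekka's sense) gives that ${\bf 1}_{G_i}$ is weakly contained in $\mathrm{Ind}_H^{G_i}{\bf 1}_H$; continuity of induction in the Fell topology then yields $\mathrm{Ind}_H^{G_i}q_\lambda\to {\bf 1}_{G_i}$, and hence
\[
\rho_i\otimes \mathrm{Ind}_H^{G_i}q_\lambda\ \longrightarrow\ \rho_i
\]
in the Fell topology, with the left-hand side having central character $p_\lambda$. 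This is precisely the perturbation you were looking for in step~(3), and once you have it for each $G_i$ separately, your step~(4) (or the citation of \cite{bl-fr}) finishes the job.
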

\begin{proof}
  Since, as noted in \Cref{subse:flds}, for $C:=C^*(H)$ the map
  \begin{equation*}
    C^*(G_1)*_{C}C^*(G_2)=:M \ni m \mapsto \|m_p\|_{M_p}
  \end{equation*}
  is always upper semicontinuous, it remains to prove that for every dense subset
  \begin{equation*}
    Y\subseteq \widehat{H}
  \end{equation*}
  of the spectrum of and every $m\in M$ we have
  \begin{equation*}
    \|m\| = \sup_{p\in Y}\|m_p\|_{M_p}.
  \end{equation*}
  First, fix an arbitrary unitary representation $\rho_2$ of $G_2$ where $H$ acts by scalars, with central character $p_0\in \widehat{H}$. Let
  \begin{equation*}
    Y\ni p_{\lambda}\to p_0
  \end{equation*}
  be a net of characters from $Y$ converging to $p_0$ and set
  \begin{equation*}
    q_{\lambda}:=p_{\lambda}p_0^{-1}\to 1\in \widehat{H}
  \end{equation*}
  be the corresponding ``error''.

  Because
  \begin{equation*}
    G_i/H,\ i=1,2 
  \end{equation*}
  are amenable homogeneous spaces of $G_i$ respectively in the sense of \cite[\S 2]{bekka-amn}, the trivial representation ${\bf 1}_{G_2}$ is weakly contained in the induced representation $\mathrm{Ind}_H^{G_2}{\bf 1}_H$. Since $q_{\lambda}\to {\bf 1}_{H}$ and induction is continuous \cite[Theorem F.3.5]{bhv}, we have
  \begin{equation*}
    \mathrm{Ind}_H^{G_2}q_{\lambda}\to \mathrm{Ind}_H^{G_2}{\bf 1}_H\to {\bf 1}_{G_2}
  \end{equation*}
  in the Fell topology, and hence 
  \begin{equation}\label{eq:7}
    \rho_2\otimes \mathrm{Ind}_H^{G_2}q_{\lambda}
    \to
    \rho_2\otimes{\bf 1}_{G_2} \cong \rho_2.
  \end{equation}
  Because $H$ acts by $p_0$ in $\rho_2$, it acts via $p_\lambda=q_{\lambda}p_0$ in the left hand side of \Cref{eq:7}. In short, we can Fell-approximate $\rho_2$ with unitary representations where $H$ acts by characters from $Y$.

  Now fix $m\in M$. According to \cite[Proposition 2.8]{blnch}, the norm of $m$ is achieved in some unitary representation $\rho$ of $G:=G_1*_HG_2$ where $H$ acts by scalars, with central character $p_0\in \widehat{H}$. Working only with non-degenerate (indeed, unital) representations isomorphic to their own $\aleph_0$-multiples, \cite[Lemma 2.4]{fell-wk1} implies that it will suffice to approximate $\rho$ arbitrarily well in the Fell topology \cite[Appendix F.2]{bhv} with unitary representations where $H$ operates with central characters belonging to the dense subset $Y\subseteq \widehat{H}$.

  In turn, in order to achieve the above it is enough to approximate the restrictions $\rho|_{G_i}$ with representations where $H$ acts via elements of $Y$. This, however, is what the first part of the proof does.
\end{proof}

Over the course of the proof of \Cref{th:amnb-cont} we have obtained

\begin{proposition}\label{pr:amnbl-dense}
  Let $H\le G$ be a central subgroup of an amenable group. Then, for every dense subset
  \begin{equation*}
    Y\subseteq \widehat{H}=\mathrm{spec}~C^*(H)
  \end{equation*}
  the canonical map
  \begin{equation}\label{eq:8}
    C^*(G)\to \prod_{p\in Y}C^*(G)_p
  \end{equation}
  is one-to-one.
  \qedhere
\end{proposition}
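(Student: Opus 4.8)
The plan is to deduce \Cref{pr:amnbl-dense} as essentially a special case of the reasoning already carried out in the proof of \Cref{th:amnb-cont}, specialized to the situation $G_1=G$ and $G_2=H$ (so that $G_1*_HG_2\cong G$ and the pushout degenerates). Concretely, the map \Cref{eq:8} fails to be one-to-one precisely when some nonzero $m\in C^*(G)$ has $m_p=0$ for all $p\in Y$, i.e.\ when
\begin{equation*}
  \|m\| > \sup_{p\in Y}\|m_p\|_{C^*(G)_p},
\end{equation*}
so it suffices to prove equality here for every $m$ and every dense $Y\subseteq\widehat H$.

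First I would recall, exactly as in the proof of \Cref{th:amnb-cont}, that by \cite[Proposition 2.8]{blnch} the norm $\|m\|$ is attained in some unitary representation $\rho$ of $G$ in which the central subgroup $H$ acts by scalars via a character $p_0\in\widehat H$. The task then reduces, via \cite[Lemma 2.4]{fell-wk1} and \cite[Appendix F.2]{bhv}, to Fell-approximating $\rho$ by representations of $G$ in which $H$ acts by characters lying in $Y$. Next, choosing a net $Y\ni p_\lambda\to p_0$ and setting $q_\lambda:=p_\lambda p_0^{-1}\to{\bf 1}_H$, I would invoke amenability of $G$: since $G/H$ is an amenable homogeneous space (here trivially, as $G$ itself is amenable, using \cite[\S 2]{bekka-amn}), ${\bf 1}_G$ is weakly contained in $\mathrm{Ind}_H^G{\bf 1}_H$, and continuity of induction \cite[Theorem F.3.5]{bhv} gives $\mathrm{Ind}_H^G q_\lambda\to{\bf 1}_G$ in the Fell topology. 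Tensoring, $\rho\otimes\mathrm{Ind}_H^G q_\lambda\to\rho\otimes{\bf 1}_G\cong\rho$, and in the left-hand side $H$ acts by the character $q_\lambda p_0=p_\lambda\in Y$. This furnishes the desired Fell approximation and hence the norm equality, so $m$ is nonzero in some fiber $C^*(G)_p$ with $p\in Y$ and \Cref{eq:8} is injective.

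Since every ingredient needed for this argument appears verbatim in the proof of \Cref{th:amnb-cont} (the only change being to run it with a single group $G$ in place of the amalgam $G_1*_HG_2$, which simplifies rather than complicates matters), there is no genuine obstacle: the proposition is literally extracted from that proof, which is why the paper phrases it as ``over the course of the proof of \Cref{th:amnb-cont} we have obtained.'' The one point worth stating cleanly is the translation between the failure of injectivity of \Cref{eq:8} and the failure of the supremum formula over $Y$, which is immediate from the definition of the fibers $C^*(G)_p$ and the fact that $m\mapsto\|m_p\|$ detects membership in the ideal generated by $\ker p$.
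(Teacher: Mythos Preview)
Your proposal is correct and matches the paper's approach exactly: the paper gives no separate proof for \Cref{pr:amnbl-dense} but simply notes that it was obtained ``over the course of the proof of \Cref{th:amnb-cont},'' and you have correctly identified and spelled out the relevant portion of that argument (the Fell approximation via $\rho\otimes\mathrm{Ind}_H^G q_\lambda\to\rho$ using amenability of $G/H$). Your framing as the degenerate pushout $G_1=G$, $G_2=H$ is fine, though one could equally just observe that the ``first part'' of the proof of \Cref{th:amnb-cont} already treats each $G_i$ individually.
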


In other words, a variant of \cite[Proposition 2.8]{blnch} with only a {\it dense} subset of the spectrum rather the entirety of it.

To prepare the ground for \Cref{ex:t-dense}, note that by \cite[Theorem F.4.4]{bhv} \Cref{eq:8} is an embedding if and only if the $G$-representations where $H$ acts by characters in $Y$ form a dense set in the Fell topology. This cannot possibly happen if

\begin{itemize}
\item $G$ has the Kazhdan property (T) (e.g. \cite[\S 1.1]{bhv}), and hence the trivial representation is isolated in the unitary dual;
\item the dense subset $Y\subset \widehat{H}$ does not contain the trivial element. 
\end{itemize}

To construct such examples all we need is a property-(T) group $G$ with an {\it infinite} central subgroup $H$, whereupon we can simply take
\begin{equation*}
  Y=\widehat{H}\setminus\{1\}. 
\end{equation*}

\begin{example}\label{ex:t-dense}
  Let $G$ be the universal cover $\widetilde{Sp_{4}(\bR)}$ of the $4\times 4$ real symplectic group. It is shown in \cite[Theorem 6.8]{MR3552017} that $G$ has property (T) (indeed, even the stronger property (T$^*$); note that the authors of that paper denote $Sp_4$ by $Sp_2$).
  
  The fundamental group of $Sp_4(\bR)$ is $\bZ$, so we can simply take that copy of $\bZ$ as the infinite central subgroup $H<G$.

  Finally, if a {\it discrete} group is desired then one can simply take the preimage through
  \begin{equation*}
    \widetilde{Sp_4(\bR)}\to Sp_4(\bR)
  \end{equation*}
  of any lattice in the latter (it will again have property (T) by \cite[Theorem 1.7.1]{bhv}). See also \cite[\S 1.7]{bhv} for a discussion of property (T) permanence under passage to universal covering groups.
\end{example}

\section{Central pushouts of RF groups}\label{se:rfgps}

The present section attempts to prove a slightly more general version of \cite[Theorem 6.9]{shl-am}. The difference is that we only assume that $G_i$ are individually RF rather than assuming that $G_1*_HG_2$ is.

\begin{theorem}\label{th:new69}
  Let $G_i$, $i=1,2$ be two amenable groups and $H\le G_i$ a common central subgroup such that
  \begin{itemize}
  \item each $G_i$ is RF;
  \item each $G_i/H$ is RF. 
  \end{itemize}
  Then, $G_1*_HG_2$ is RFD. 
\end{theorem}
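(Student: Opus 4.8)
The plan is to deduce this from \Cref{th:main} by producing, on $C^*(G_1*_HG_2)$ with its central $C=C^*(H)$-algebra structure, enough RFD fibers to separate points. By \Cref{pr:amnbl-dense} applied to $G=G_1*_HG_2$ — which is amenable, being a central pushout of amenables, and residually finite, a fact we may assume proved in \Cref{pr:gicrf} as promised by the introduction (or which we establish here directly via the $H$-filtration machinery of \Cref{subse:fltr}) — we know the map $C^*(G)\to\prod_{p\in Y}C^*(G)_p$ is injective for \emph{every} dense $Y\subseteq\widehat H$. In view of \Cref{re:nonz}, it therefore suffices to exhibit one dense subset $Y\subseteq\widehat H$ such that $C^*(G)_p\cong C^*(G_1)_p * C^*(G_2)_p$ is RFD for all $p\in Y$; by \cite[Theorem 3.2]{el} this reduces further to making each $C^*(G_i)_p$ RFD for $p$ in a common dense set.

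The key step is thus to understand the fibers $C^*(G_i)_p$. For a character $p\in\widehat H$ of finite order — equivalently, one whose kernel $K_p:=\ker(H\to\mathbb T)$ has finite index in $H$ — I expect $C^*(G_i)_p$ to be a quotient of $C^*(G_i/K_p)$ twisted by $p$, and in any case to be a ``virtually'' group $C^*$-algebra of the quotient $G_i/K_p$. Since each $G_i/H$ is RF and $H/K_p$ is finite, $G_i/K_p$ is a finite-by-RF, hence residually finite, amenable group; amenability plus residual finiteness gives that $C^*(G_i/K_p)$ is RFD (e.g. by Bekka's theorem that RF amenable groups have RFD full $C^*$-algebra, cf.\ the discussion around \cite[Proposition 2.4]{shl-am}). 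A twisted version of the same statement — RFD-ness of the twisted group $C^*$-algebra of an amenable RF group with a cocycle of finite order, or equivalently of the honest group $C^*$-algebra of a finite central extension — then yields that $C^*(G_i)_p$ is RFD. Finally, the finite-order characters are dense in $\widehat H$: $H$ is a countable discrete abelian group, so $\widehat H$ is a compact abelian group in which the torsion characters separate points hence form a dense subgroup (the torsion subgroup of $\widehat H$ is dense because its annihilator is the intersection of the finite-index subgroups of $H$, which is trivial by residual finiteness of $G_i$, or directly since $\widehat H$ is a quotient of a product of one-dimensional tori and circles). Taking $Y$ to be this dense set of finite-order characters, intersected appropriately so that it works for both $i=1,2$ simultaneously (it is the same set, so no intersection is needed), completes the argument.

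The main obstacle I anticipate is the twisted-coefficients bookkeeping in identifying $C^*(G_i)_p$ and checking its RFD-ness: one must verify that quotienting $C^*(G_i)$ by the ideal generated by $\{h - p(h)\cdot 1 : h\in H\}$ produces exactly the $p$-isotypic group $C^*$-algebra, and that this is RFD. The cleanest route is probably to replace $p$ of order $n$ by the honest finite central extension: let $\widehat K_p$ be the preimage data so that $C^*(G_i)_p$ becomes a direct summand of $C^*(\Gamma_i)$ for a suitable group $\Gamma_i$ fitting in $1\to \mathbb Z/n\to \Gamma_i\to G_i/K_p\to 1$, which is again amenable and RF, and invoke Bekka's RFD theorem for $\Gamma_i$; a direct summand of an RFD algebra is RFD. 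One should also double-check that the density of finite-order characters genuinely holds for the (possibly non-finitely-generated) abelian group $H$ — this is where residual finiteness of $H$, inherited from $G_i$ being RF, is used, since it guarantees $\bigcap_{n}\{\chi : \chi^n = 1\}^{\perp} = \bigcap_{N\le H,\,[H:N]<\infty} N = \{1\}$.
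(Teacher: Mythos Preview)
Your overall architecture---reduce to RFD-ness of fibers $C^*(G_i)_p$ over a dense set of finite-order characters, then invoke \cite[Theorem 3.2]{el}---matches the paper's. However, two of your justifying claims are false.

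\textbf{The pushout is not amenable.} You apply \Cref{pr:amnbl-dense} to $G=G_1*_HG_2$, asserting that it is ``amenable, being a central pushout of amenables''. This is incorrect: already $\bZ*_{\{1\}}\bZ=F_2$ fails, and more generally $\bZ^2*_{\bZ}\bZ^2\cong\bZ\times F_2$ (with $\bZ$ the common first factor) is non-amenable. So \Cref{pr:amnbl-dense} does not apply to the pushout. The paper handles exactly this difficulty with \Cref{th:amnb-cont}: continuity of the field $C^*(G_1)*_{C^*(H)}C^*(G_2)$ over $\widehat H$ is established using amenability of each $G_i$ \emph{separately} (via Fell-continuity of induction and weak containment ${\bf 1}_{G_i}\prec\mathrm{Ind}_H^{G_i}{\bf 1}_H$), never of $G$ itself. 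Once the field is continuous, the dense-subset reduction goes through.

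\textbf{``Finite-by-RF, hence RF'' is false.} You argue that $G_i/K_p$ is RF because it sits in $1\to H/K_p\to G_i/K_p\to G_i/H\to 1$ with finite kernel and RF quotient. But a finite \emph{central} extension of an RF group need not be RF: the paper's own \Cref{re:kg} cites the Gupta example of a non-RF group $G$ with an order-two central subgroup $K$ such that $G/K$ is RF. Your later rephrasing via $\Gamma_i$ has the same gap. The paper does not claim RF-ness of $G_i/\ker p$ for \emph{all} finite-order characters $p$; instead \Cref{le:sandwich} singles out those $p$ that factor through some $H/H_\alpha$ with $H_\alpha=G_\alpha\cap H$ coming from an $H$-filtration $\{G_\alpha\}$ of $G_i$, and for those specific $p$ proves $\bigcap_\alpha(\ker p)G_\alpha=\ker p$, whence $G_i/\ker p$ is RF. That set of characters is still dense, and \Cref{le:is-rfd} then converts RF (amenable) into RFD for the fiber.
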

\begin{proof}
  We will obtain the result as an application of \Cref{th:amnb-cont}, with
  \begin{equation*}
    A=C^*(G_1),\ B=C^*(G_2)\text{ and } C=C^*(H). 
  \end{equation*}
  According to that result, what we have to argue is that the fibers $A_p$ and $B_p$ are RFD for a dense set of points $p$ in the spectrum
  \begin{equation*}
    \widehat{H}=\text{Pontryagin dual of }H = \mathrm{spec}\; C^*(H). 
  \end{equation*}
  This is precisely what \Cref{cor:is-rfd} does, finishing the proof.
\end{proof}


\begin{lemma}\label{le:sandwich}
  Let $H\le G$ be a central inclusion such that $G$ and $G/H$ are both RF. Then, the characters $p:H\to \bS^1$ whose kernels
  \begin{equation*}
    N=\ker p
  \end{equation*}
  give rise to RF quotients $G/N$ form a dense subset of the Pontryagin dual $\widehat{H}$.
\end{lemma}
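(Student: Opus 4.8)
The plan is to exploit the two residual finiteness hypotheses to produce, for each character $p_0\in\widehat H$ and each finite subset $F\subseteq H$, a nearby character $p$ whose kernel $N=\ker p$ has finite index in $H$ and such that $G/N$ is RF. The neighborhood basis of $p_0$ in $\widehat H$ is given by sets of the form $\{p : |p(h)-p_0(h)|<\varepsilon,\ h\in F\}$, so it suffices to find, for each such $F$ and $\varepsilon$, one character $p$ in that neighborhood with $G/\ker p$ RF; density follows. The key observation is that a character with finite-index kernel in $H$ that kills a fixed finite set of "bad" elements can be found close to $p_0$ precisely because $H$ is abelian and its finite quotients separate points: first approximate $p_0$ by a character factoring through a finite quotient $H/H_1$ of $H$ (possible since $\widehat H$ contains the characters of finite order as a dense subgroup when $H$ is any abelian group — the finite-order characters are dense in the Pontryagin dual of a discrete abelian group), then refine $H_1$ downward using the hypotheses.

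First I would reduce to the case where $p_0$ itself has finite-index kernel $H_0=\ker p_0\trianglelefteq G$ (it is normal in $G$ since $H$ is central), by the density of finite-order characters just mentioned; replacing $p_0$ by such an approximant changes nothing about what we must prove. Now I want $G/H_0$ to be RF. It need not be, but here is where I use that $G/H$ is RF: since $G/H_0$ is an extension of $G/H$ by the finite abelian group $H/H_0$, and $G/H$ is RF, and a finite-by-RF group... is \emph{not} automatically RF, so this naive step fails — and this is exactly the main obstacle (see below). The correct move is instead: use that $G$ is RF to pick a finite-index normal subgroup $K\trianglelefteq G$ with $K\cap H\subseteq H_0$ and, simultaneously, shrink using RF-ness of $G/H$; set $N := (K\cap H)$, note $N$ has finite index in $H$, let $p$ be any character of $H$ with kernel exactly $N$ that lies close to $p_0$ on $F$ (possible since $N\subseteq H_0=\ker p_0$, so characters of $H/N$ extending $p_0|$ on the relevant coordinates exist and can be chosen in the given neighborhood by a compactness/finiteness argument in the finite group $H/N$). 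Then $G/N$ surjects onto $G/K$ (finite) and onto $G/H$ (RF), and $K/N = K/(K\cap H)\cong KH/H \le G/H$ is RF as a subgroup of an RF group; so $G/N$ is an extension of the RF group $G/H$ by... again a finite group in general, which brings back the obstacle.

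The honest statement of the difficulty: the lemma is not a soft consequence of closure properties of RF groups, since "finite-by-RF" need not be RF. The real content must come from choosing $N$ so that $G/N$ is \emph{sandwiched} $G/N \twoheadrightarrow G/H$ with kernel $H/N$ \emph{central} in $G/N$ and finite — and a central extension of an RF group by a finite group \emph{is} RF (this is the elementary fact I expect the paper to invoke, and it is where the word "sandwich" in the lemma name points). So the clean argument is: (i) approximate $p_0$ by a finite-order character, WLOG $p_0$ has finite-index kernel $H_0$; (ii) since $H_0\trianglelefteq G$ has finite index in $H$ but $G/H_0$ need not be RF, use RF-ness of $G$ to find $L\trianglelefteq G$ of finite index with $L\cap H \le H_0$; (iii) set $N=L\cap H$, a finite-index characteristic-in-$H$ subgroup with $N\subseteq H_0$; (iv) choose $p\in\widehat H$ with $\ker p = N$ lying in the prescribed neighborhood of $p_0$ — possible because on the finite group $H/N$ the point $p_0|_{H/N}$ can be perturbed to a \emph{faithful} character of $H/N$ while staying within $\varepsilon$ on the finitely many elements of $F$ (faithful characters of a finite abelian group are dense among all characters agreeing with a given one on a subgroup, or more simply: $H/N$ embeds in a torus and faithful characters form a dense open subset); (v) now $N$ is central in $G$, so $H/N$ is central in $G/N$ and finite, and $(G/N)/(H/N)=G/H$ is RF by hypothesis, whence $G/N$ is RF as a finite central extension of an RF group. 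The main obstacle to watch is step (iv): ensuring one can make $\ker p$ \emph{exactly} $N$ (not just contained in $N$ or containing it) while controlling $|p(h)-p_0(h)|$ on $F$; this is a finite-dimensional density statement inside $\widehat{H/N}\cong$ (a finite group), straightforward but needing care, and step (ii)'s compatibility of $L$ with the chosen $H_0$, which is immediate from RF-ness of $G$ applied to the finitely many nontrivial cosets of $H_0$ in... rather, to separating $1$ from $H/H_0\setminus\{1\}$ inside $G$.
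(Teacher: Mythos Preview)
Your step (v) contains a genuine error: it is \emph{not} true that a finite central extension of an RF group is RF. The paper's own \Cref{re:kg} cites exactly such a counterexample (a non-RF amenable group $G$ with a central order-two subgroup $K$ such that $G/K$ is RF); Deligne's non-RF central $\bZ/n$-extensions of $Sp_{2g}(\bZ)$ give non-amenable examples. So the ``elementary fact'' you expect the paper to invoke is false, and your plan collapses at precisely the step you flagged as unproblematic. Your step (iv) is also shaky: a finite abelian group $H/N$ admits a faithful character into $\bS^1$ only when it is cyclic, so you cannot in general arrange $\ker p = N$ exactly.

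Both issues are reparable, and you already hold the ingredients. With $N = L\cap H$ and $N' := \ker p \supseteq N$, observe $L\cap N' = L\cap H = N$ (since $N \le N' \le H$), so
\begin{equation*}
LN'/N' \;\cong\; L/N \;\cong\; LH/H \;\le\; G/H
\end{equation*}
is RF; as $LN' \trianglelefteq G$ has finite index, $G/N'$ has a finite-index \emph{normal} RF subgroup and is therefore RF by the standard core argument. This bypasses both (iv) and (v). You in fact wrote down this isomorphism earlier and then abandoned it for the incorrect (v).

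The paper's route is different in flavor: it fixes an $H$-filtration $\{G_\alpha\}$ of $G$ (so $\bigcap_\alpha HG_\alpha = H$) once and for all, sets $H_\alpha = G_\alpha \cap H$, and shows that \emph{every} character $p$ factoring through some $H/H_{\alpha_0}$ already works. For such $p$ with $N = \ker p \supseteq H_{\alpha_0}$ one has $NG_{\alpha_0}\cap H = N$, whence $\bigcap_\alpha NG_\alpha \subseteq \bigcap_\alpha HG_\alpha = H$ forces $\bigcap_\alpha NG_\alpha = N$; thus $\{G_\alpha\}$ is itself an $N$-filtration and $G/N$ is RF. This argument never invokes closure properties of the class of RF groups and handles all $N$ containing some $H_\alpha$ uniformly, automatically avoiding the kernel-exactly-$N$ difficulty.
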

\begin{proof}
  Since $G$ is RF the normal finite-index subgroups $G_\alpha\trianglelefteq G$ have trivial intersection. Additionally, since $G/H$ is RF, $G$ has an $H$ filtration in the sense of \Cref{def:filt} (e.g. \cite[Proposition 3.3]{shl-am}) and hence
  \begin{equation}\label{eq:5}
    \bigcap_{\alpha}HG_{\alpha} = H.
  \end{equation}

  Since
  \begin{equation*}
    H_{\alpha}:=G_{\alpha}\cap H\le H
  \end{equation*}
  have trivial intersection, the union of the duals
  \begin{equation*}
    \widehat{H/H_{\alpha}}\subseteq \widehat{H}
  \end{equation*}
  is a dense subgroup. We claim that any $p\in \widehat{H/H_{\alpha}}$ will meet the requirements in the statement; proving this will achieve the desired conclusion, so it is the task we turn to next.

  Fix such a character $p:H\to \bS^1$, factoring through some
  \begin{equation*}
    H\to H/H_{\alpha_0},
  \end{equation*}
  and let $N\le H$ be its kernel. Then, by its very construction, $N$ will contain $H_{\alpha_0}=H\cap G_{\alpha_0}$. This means that
  \begin{equation*}
    N G_{\alpha_0}\cap H\subseteq N
  \end{equation*}
  (which is then an equality), and hence the intersection
  \begin{equation*}
    \bigcap_{\alpha}N G_{\alpha}\subseteq H
  \end{equation*}
  (where the latter inclusion uses \Cref{eq:5}) cannot possibly contain $N$ {\it strictly}. In conclusion we have
  \begin{equation*}
    \bigcap_{\alpha}NG_{\alpha} = N,
  \end{equation*}
  meaning that the filtration $\{G_{\alpha}\}$ is compatible with $N$ (i.e. an $N$-filtration) and hence $G/N$ is RF.
\end{proof}

\begin{remark}\label{re:kg}
  The RF requirements in \Cref{th:new69} are both crucial:

  On the one hand, \cite[\S 8]{shl-am} recalls the example given by Abels in \cite{abels} of a central inclusion $\bZ<G$ into an RF amenable group such that $G/\bZ$ is not RF.
  
  On the other hand, \cite{cmpb} contains an example (attributed there to C. Kanta Gupta) of a non-residually finite amenable group $G$ whose quotient by an order-two normal subgroup $K\trianglelefteq G$ is residually finite. Centrality is easy to arrange, since the centralizer of $K$ in $G$ will have finite index in the latter.
\end{remark}

Although \Cref{le:sandwich} ensures that the quotients $G/N$
for
\begin{equation*}
  N=\ker(p:H\to \bS^1)
\end{equation*}
are RF, we have yet to prove that the resulting fiber algebras $C^*(G)/\langle h-p(h),\ h\in H\rangle$ are RFD. 

\begin{lemma}\label{le:is-rfd}
  Let $H\le G$ be a central subgroup, $p\in \widehat{H}$ a character of finite order, and $N=\ker p$ its kernel in $H$. If $G/N$ is RFD then so is the fiber $C^*$-algebra
  \begin{equation}\label{eq:6}
    C^*(G)_p:=C^*(G)/\langle h-p(h),\ h\in H\rangle
  \end{equation}
  corresponding to $p$ is RFD.  
\end{lemma}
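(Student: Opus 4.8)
The key point is that $C^*(G)_p$ as defined in \Cref{eq:6} should be expressible as a $C^*$-algebra built out of the RFD algebra $C^*(G/N)$ together with a finite cyclic group worth of extra data, since $p$ has finite order. Concretely, let $d$ be the order of $p$, and observe that $N=\ker p$ is a finite-index subgroup of the central subgroup $H$, with $H/N$ cyclic of order $d$. The first step is to identify the fiber: imposing the relations $h = p(h)$ for $h\in H$ in particular forces $n = 1$ for all $n\in N$, so there is a surjection $C^*(G)_p \twoheadrightarrow$ something living over $C^*(G/N)$; conversely the relations that remain are indexed by a generator of the finite cyclic group $H/N$, which maps to a central unitary of order dividing $d$ in $C^*(G/N)$ that is constrained to equal a prescribed root of unity. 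So I would argue that
\[
  C^*(G)_p \cong C^*(G/N)\big/\big\langle \bar{h}_0 - p(h_0)\big\rangle,
\]
where $h_0$ generates $H/N$ and $\bar h_0$ is its central image; this is a quotient of $C^*(G/N)$ by a single central relation identifying a finite-order central unitary with a scalar.

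**Why this suffices.** Once the fiber is written as $C^*(G/N)_{\bar p}$ for the finite-order character $\bar p$ of the \emph{finite} central subgroup $\bar H := H/N \le G/N$, I would invoke the following mechanism: a $C^*$-algebra $D$ with a finite central subgroup $\bar H$ of unitaries decomposes as a finite direct sum $D \cong \bigoplus_{\chi \in \widehat{\bar H}} D_\chi$ over the (finite) dual of $\bar H$, the $\chi$-isotypic summands cut out by the central projections $\tfrac{1}{|\bar H|}\sum_{h\in \bar H} \overline{\chi(h)}\, h$. Each summand $D_\chi$ is precisely the fiber $D_\chi = C^*(G/N)_\chi$. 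Since $D = C^*(G/N)$ is RFD by hypothesis, and RFD-ness passes to quotients and in particular to direct summands (a finite-dimensional representation of $D$ restricts to each summand), every $D_\chi$ is RFD; in particular the summand corresponding to our $\bar p$, which is the fiber $C^*(G)_p$, is RFD.

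**The main obstacle.** The delicate step is the algebraic identification of the fiber — checking that $\langle h - p(h) : h\in H\rangle$ in $C^*(G)$ pushes forward exactly to the ideal generated by $\bar h_0 - p(h_0)$ in $C^*(G/N)$, with no collapse and no extra relations. This amounts to verifying that the natural surjection $C^*(G) \to C^*(G/N)$ carries the former ideal onto the latter and that its kernel is already contained in $\langle h - p(h)\rangle$ (which it is, since that ideal contains $n-1$ for every $n\in N$, and these generate $\ker(C^*(G)\to C^*(G/N))$). The finiteness of $p$'s order is used twice and essentially: once to guarantee $N$ has finite index in $H$ so that $H/N$ is a finite group whose dual is finite, and once so that the central decomposition of $C^*(G/N)$ into isotypic summands is a genuine finite direct sum. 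For infinite-order $p$ the summand would not split off as a direct summand, and being a direct summand (as opposed to a mere quotient) is not actually needed for RFD — a quotient suffices — but the finite-order hypothesis makes the quotient description above transparent, so I will lean on it. A cleaner alternative, avoiding direct-sum talk altogether, is simply: $C^*(G)_p$ is a quotient of the RFD algebra $C^*(G/N)$, hence RFD; I would present the proof in that streamlined form, using finiteness of the order of $p$ only to justify the quotient identification \[ C^*(G)_p \cong C^*(G/N)\big/\big\langle \bar h_0 - p(h_0)\big\rangle. \]
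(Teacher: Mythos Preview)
Your main argument is correct and essentially identical to the paper's: identify $C^*(G)_p$ with the fiber $C^*(G/N)_{\bar p}$ over the finite cyclic group $H/N$, then use the finite direct-sum decomposition of $C^*(G/N)$ via central projections to see that this fiber is a direct summand (Cartesian factor) of the RFD algebra $C^*(G/N)$, hence RFD.

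However, there is a genuine error in your reasoning that you should fix. You write that ``RFD-ness passes to quotients and in particular to direct summands,'' and your ``cleaner alternative'' relies on the claim that a quotient of an RFD algebra is RFD. This is \emph{false}: quotients of RFD $C^*$-algebras need not be RFD (e.g.\ $C^*(\bF_2)$ is RFD but has many non-RFD quotients, such as the Cuntz algebra $\cO_2$ or any simple infinite-dimensional quotient). What \emph{is} true is that a direct summand of an RFD algebra is RFD --- your parenthetical justification ``a finite-dimensional representation of $D$ restricts to each summand'' is the correct argument for this, and this is exactly how the paper proceeds. So the direct-sum decomposition is not optional decoration; it is the substance of the proof, and your ``cleaner alternative'' does not work. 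The finite order of $p$ is essential precisely because it gives you a direct summand rather than a mere quotient.
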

\begin{proof}
  Note that $C^*(G)_p$ is precisely the fiber of $C^*(G/N)$ at the character $\overline{p}$ induced by $p$ on the quotient (finite cyclic) group $H/N$: indeed, denoting images of elements $g\in G$ in $G/N$ by $\overline{g}$, we have
  \begin{equation*}
    C^*(G)_p = C^*(G)/\langle h-p(h),\ h\in H\rangle = C^*(G/N)/\left\langle \overline{h}-\overline{p}\left(\overline{h}\right),\ \overline{h}\in H/N\right\rangle = C^*(G/N)_{\overline{p}}
  \end{equation*}
because the kernel of $C^*(G)\to C^*(G/N)$ is generated by $h-1$, $h\in N$, which are already contained in the ideal we are modding out in \Cref{eq:6}. 

  For that reason, we may as well assume that
  \begin{itemize}
  \item $N$ is trivial, and hence
  \item $H$ is finite cyclic;
  \item $G$ is RFD. 
  \end{itemize}
  But now note that $C^*(G)_p$ is a fiber of the RFD $C^*$-algebra over the {\it finite-dimensional} central subalgebra $C^*(H)\le C^*(G)$. In general, a $C^*$-algebra $A$ will break up as a product of the fibers $A_p$ over a finite-dimensional central $C^*$-subalgebra $C\le A$, by simply cutting $A$ with the minimal projections in $C$.

  In particular, under our assumptions the fiber $C^*(G)_p$ is a Cartesian factor of $C^*(G)$, and hence the RFD-ness of the latter entails that of the former.
\end{proof}

\begin{corollary}\label{cor:is-rfd}
Under the hypotheses of \Cref{le:sandwich} the characters $p:H\to \bS^1$ for which the fiber \Cref{eq:6} is RFD form a dense subset of $\widehat{H}$.   
\end{corollary}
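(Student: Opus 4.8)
The plan is to combine the two preceding results mechanically. \Cref{le:sandwich} tells us that the set of characters $p\in\widehat{H}$ whose kernel $N=\ker p$ yields an RF quotient $G/N$ is dense in $\widehat{H}$. However, the conclusion of \Cref{le:is-rfd} as I wish to invoke it requires two things of $p$: that $p$ have \emph{finite order} in $\widehat{H}$, and that the quotient $G/N$ be \emph{RFD} rather than merely RF. So the first step is to observe that the dense set produced by \Cref{le:sandwich} is in fact a set of finite-order characters: in the proof of that lemma the dense set is exhibited as $\bigcup_\alpha \widehat{H/H_\alpha}$ with each $H_\alpha=G_\alpha\cap H$ of finite index in $H$, so every such $p$ factors through a finite quotient $H/H_\alpha$ and therefore has finite order. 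Hence the finite-order hypothesis of \Cref{le:is-rfd} is automatically satisfied by every $p$ in the \Cref{le:sandwich} dense set.

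Second, I would upgrade ``$G/N$ is RF'' to ``$G/N$ is RFD.'' This is where amenability enters: under the standing hypotheses of \Cref{le:sandwich} the ambient group $G$ — being one of the $G_i$ in the intended application — is amenable, hence so is the quotient $G/N$; and for an amenable group, residual finiteness implies residual finite-dimensionality of its group $C^*$-algebra (the finite quotients separate points of the group, their associated finite-dimensional representations separate points of $\bC[G/N]$, and amenability makes $C^*(G/N)$ the completion in which these representations remain norm-determining — this is the standard fact used, e.g., via \cite[Proposition 2.4]{shl-am} in the direction we need). Strictly speaking \Cref{le:sandwich} as stated does not include amenability of $G$; in the write-up I would either add that hypothesis to the corollary or simply note that in every application (namely \Cref{th:new69}) $G=G_i$ is amenable, so the implication RF $\Rightarrow$ RFD for $G/N$ is available.

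With these two observations in hand the corollary is immediate: take the dense subset of $\widehat{H}$ supplied by \Cref{le:sandwich}; every character $p$ in it has finite order and has $G/N$ RF, hence (by amenability) $G/N$ RFD; \Cref{le:is-rfd} then applies verbatim to conclude that the fiber $C^*(G)_p$ of \Cref{eq:6} is RFD. Thus the RFD-fiber characters contain this dense set and are therefore themselves dense in $\widehat{H}$.

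The only genuine friction point is the bookkeeping around amenability noted above — making sure the ``RF $\Rightarrow$ RFD for $G/N$'' step is legitimately licensed by the hypotheses actually in force — together with the minor matter of confirming that the \Cref{le:sandwich} dense set consists of finite-order characters (which, as noted, is transparent from its construction). Neither is a real obstacle; the corollary is essentially a repackaging of the two lemmas, and the proof is a single short paragraph chaining them together.
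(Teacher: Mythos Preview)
Your proposal is correct and follows exactly the paper's route: observe that the dense set of characters produced in the proof of \Cref{le:sandwich} consists of finite-order characters, then feed these into \Cref{le:is-rfd}. The paper's proof is the one-liner ``the characters $p$ in the proof of \Cref{le:sandwich} are of finite order, and hence \Cref{le:is-rfd} applies.''

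You are in fact more scrupulous than the paper here. The paper's proof silently jumps from ``$G/N$ is RF'' (the conclusion of \Cref{le:sandwich}) to ``$G/N$ is RFD'' (the hypothesis \Cref{le:is-rfd} actually requires), without comment. You correctly identify that this passage needs amenability of $G$ --- which is not among the stated hypotheses of \Cref{le:sandwich} or of the corollary, but is present in the only place the corollary is invoked, namely \Cref{th:new69}. Your proposed fix (either add amenability to the corollary's hypotheses or note that it holds in every application) is exactly the right bookkeeping; the paper simply elides it.
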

\begin{proof}
  Indeed, the characters $p$ in the proof of \Cref{le:sandwich} are of finite order, and hence \Cref{le:is-rfd} applies.
\end{proof}

\subsection{Recovering residual finiteness for a pushout}

Recall that \cite[Theorem 6.9]{shl-am} assumes $G_1*_CG_2$ is RF, whereas \Cref{th:new69} only requires that $G_i$, $i=1,2$ be RF individually (along with $G_i/H$). We argue here that that distinction is only apparent:

\begin{theorem}\label{pr:gicrf}
Let $H\le G_i$, $i=1,2$ be a common central subgroup such that $G_i$ and $G_i/H$ are all RF. Then, $G_1*_H G_2$ is RF.   
\end{theorem}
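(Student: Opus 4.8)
I want to show $G_1*_HG_2$ is residually finite, using the same ingredients as the proof of \Cref{le:sandwich}: the RF-ness of $G_i$ together with the RF-ness of $G_i/H$, which (via \cite[Proposition 3.3]{shl-am}) supplies an $H$-filtration of each $G_i$. The natural strategy is to build, for each nontrivial $w\in G_1*_HG_2$, a finite quotient of $G_1*_HG_2$ in which $w$ survives. The key point is that a homomorphism out of $G_1*_HG_2$ is the same as a pair of homomorphisms $\phi_i\colon G_i\to Q$ that agree on $H$; so I need to produce a finite group $Q$ and such a compatible pair that does not kill $w$.

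**Reduction and the central-character trick.** Write $Z=Z(G_1*_HG_2)$; note $H\le Z$. I would first reduce to the case where $w$ has a specific form relative to $H$. For $w\notin H$, $w$ is a nontrivial element of the free product $G_1/H * G_2/H$ after quotienting by $H$ (here I use that the kernel of $G_1*_HG_2\to G_1/H*G_2/H$ is exactly $H$, which holds because the pushout of $G_i\to G_i/H$ is $G_1/H*G_2/H$). Since $G_i/H$ are RF, their free product $G_1/H*G_2/H$ is RF (free products of RF groups are RF, a classical fact — Gruenberg), so $w$ survives in a finite quotient of $G_1/H*G_2/H$, hence in a finite quotient of $G_1*_HG_2$. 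Thus the only genuinely hard case is $w\in H\setminus\{1\}$.

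**Handling $w\in H$.** For $1\ne w\in H$, I need a finite group $Q$ and compatible $\phi_i\colon G_i\to Q$ with $\phi_i(w)\ne 1$; since $w\in H$ the two conditions $\phi_1(w)\ne1$, $\phi_2(w)\ne1$ are the same. By Pontryagin duality on the (abelian, since central) subgroup $H$, there is a finite-order character $p\in\widehat H$ with $p(w)\ne 1$ — indeed the finite-order characters separate points of $H$ whenever $H$ is residually finite, which it is as a subgroup of the RF group $G_1$; concretely, pass to $H/H_\alpha$ for an appropriate term of the $H$-filtration as in \Cref{le:sandwich}. By \Cref{le:sandwich} applied to each $G_i$, we may further choose $p$ so that $N_i:=\ker(p|_{G_i\text{-side}})$... more precisely, by intersecting the dense sets from \Cref{le:sandwich} for $i=1,2$ (a dense subgroup of $\widehat H$ meets another dense subgroup densely, and both are subgroups, so their intersection is still dense), we get a single finite-order $p\in\widehat H$ with $p(w)\ne1$ and such that $G_i/\ker p$ is RF for both $i$. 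Now form $L_i:=G_i/\ker p$, which is RF and contains the finite cyclic central subgroup $\bar H:=H/\ker p$; the element $\bar w$ is a nontrivial element of $\bar H\le L_i$. Using RF-ness of $L_i$, pick finite-index normal $M_i\trianglelefteq L_i$ with $\bar w\notin M_i$; after replacing $M_i$ by $M_i\cap(\text{finite-index normal subgroup trivial on }\bar H)$ — possible since $\bar H$ is finite, so $\bigcap_{m\in\bar H}$ conjugators has finite index — I can arrange $M_i\cap\bar H=\{1\}$, so that $\bar H$ embeds in the finite group $L_i/M_i$ and $\bar w$ maps to a nontrivial element there. The remaining task is to glue the two finite quotients $L_i/M_i$ along the common finite subgroup $\bar H$: take $Q:=(L_1/M_1)*_{\bar H}(L_2/M_2)$, an amalgamated free product of finite groups over a common finite subgroup, hence residually finite (Baumslag), and $\bar w\ne1$ in $Q$ since $\bar H$ embeds in $Q$; finally push $\bar w$ into a finite quotient of $Q$. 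The composites $G_i\to L_i\to L_i/M_i\hookrightarrow Q\to(\text{finite})$ agree on $H$ by construction (both factor through $p$ on $H$), hence define a homomorphism $G_1*_HG_2\to(\text{finite})$ that does not kill $w$.

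**Main obstacle.** The delicate step is the simultaneous choice of the finite-order character $p$: I need it to separate the given $w$ \emph{and} to lie in the dense sets furnished by \Cref{le:sandwich} for \emph{both} indices $i=1,2$ at once, and then to arrange that $\bar H$ embeds into each finite quotient $L_i/M_i$ so that the amalgam $Q$ makes sense and detects $\bar w$. Each of these is individually routine (density of subgroups of $\widehat H$, finiteness of $\bar H$), but the bookkeeping of keeping all conditions compatible — together with the appeal to residual finiteness of free products and of amalgams of finite groups over a common finite subgroup — is where the argument must be assembled carefully.
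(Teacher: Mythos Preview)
Your case split is natural, and the $w\notin H$ branch is correct: since $H$ is central (hence normal) in $G_1*_HG_2$, the universal properties give $(G_1*_HG_2)/H\cong (G_1/H)*(G_2/H)$, and free products of RF groups are RF.

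The $w\in H$ branch, however, has a genuine gap. You assert that the dense sets furnished by \Cref{le:sandwich} for $i=1,2$ are dense \emph{subgroups} of $\widehat H$ whose intersection is again dense. The subgroup claim is not justified, and the density-of-intersection claim is false in general: in $\widehat{\bZ}\cong\bT$ the $2$-power roots of unity and the $3$-power roots of unity are each dense subgroups, yet they meet only in $\{1\}$. Unwinding, what you need is some finite-order $p$ with $p(w)\ne 1$ and $\ker p\supseteq H_{1,\alpha}$, $\ker p\supseteq H_{2,\beta}$ for suitable terms of the two $H$-filtrations; equivalently $w\notin H_{1,\alpha}H_{2,\beta}$ for some $\alpha,\beta$. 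But from $\bigcap_\alpha H_{1,\alpha}=\bigcap_\beta H_{2,\beta}=\{1\}$ one cannot deduce $\bigcap_{\alpha,\beta}H_{1,\alpha}H_{2,\beta}=\{1\}$ (again, take $H_{1,\alpha}=2^\alpha\bZ$ and $H_{2,\beta}=3^\beta\bZ$). So the ``simultaneous choice of $p$'' that you yourself flagged as the main obstacle is exactly where the argument breaks.

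The paper avoids this synchronization problem altogether. It first handles the diagonal case $G_1=G_2=:G$, where a single $H$-filtration of $G$ is tautologically $(H,H,\id)$-compatible with itself, so \cite[Proposition~2]{baums} applies. For distinct $G_i$ it passes to $G:=G_1\otimes_HG_2=(G_1\times G_2)/\{(h,h^{-1}):h\in H\}$, notes that $G_1*_HG_2$ embeds in $G*_HG$, checks $G/H\cong(G_1/H)\times(G_2/H)$ is RF, and proves (\Cref{le:otimes}) that $G$ is RF by pushing the two $H$-filtrations forward. The tensor-product trick \emph{is} the device that makes the two filtrations compatible --- precisely the missing ingredient in your direct approach.
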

\begin{proof}
  Note first that the case $G_1=G_2$ is clear: indeed, the assumptions that $G$ and $G/H$ are RF then show that $G$ has an $H$-filtration. The two copies of that filtration in the two copies of $G$ are then $(H,H,\id)$-compatible in the sense of \cite[\S 2.2]{baums}, and hence $G*_H G$ is RF by \cite[Proposition 2]{baums}.

  It thus remains to reduce the problem to the case $G_1=G_2$. To do this, consider the {\it tensor product}
  \begin{equation*}
    G:=G_1\otimes_HG_2
  \end{equation*}
  defined by identifying the two copies of $H$ in $G_1\times G_2$; in other words, $G_1\otimes_HG_2$ is $G_1*_HG_2$ modulo the relations making the elements of
  \begin{equation*}
    G_1\text{ and }G_2\; \le\; G_1*_HG_2
  \end{equation*}
  commute.

  We then have
  \begin{equation*}
    G/H\cong (G_1/H)\times (G_2/H),
  \end{equation*}
  which is thus RF by assumption. On the other hand, if we show that $G*_H G$ itself is RF then so is
  \begin{equation*}
    G_1*_HG_2\le G*_HG
  \end{equation*}
  (where the inclusions $G_i\le G=G_1\otimes_HG_2$ are the obvious ones).

  To summarize, we have thus far
  \begin{itemize}
  \item observed that the conclusion holds when $G_1=G_2$ (equal to a common group $G$, say); 
  \item reduced the problem to its instance for $G=G_1\otimes_HG_2$, modulo
  \item the desired hypothesis that that $G$ is RF. 
  \end{itemize}

  In conclusion, all that remains to be proven is that under our hypotheses $G:=G_1\otimes_HG_2$ is indeed RF; we relegate this to \Cref{le:otimes}.
\end{proof}

\begin{lemma}\label{le:otimes}
  Let $H\le G_i$, $i=1,2$ be a common central subgroup such that $G_i$ and $G_i/H$ are all RF. Then, $g:=G_1\otimes_H G_2$ is RF.
\end{lemma}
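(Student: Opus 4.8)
The plan is to realize $G:=G_1\otimes_H G_2$ as an extension with RF kernel and RF quotient and then to promote this to residual finiteness by exhibiting a compatible filtration. Concretely, I would first record the short exact sequence
\begin{equation*}
  1\to H\to G_1\otimes_H G_2\to (G_1/H)\times (G_2/H)\to 1,
\end{equation*}
whose outer terms are RF ($H$ is a subgroup of the RF group $G_1$, and the quotient is a product of the RF groups $G_i/H$). An arbitrary extension of RF groups need not be RF, so the point is to use the extra structure: $H$ is \emph{central}, and — crucially — each $G_i$ is itself RF with $G_i/H$ RF, so each $G_i$ carries an $H$-filtration $\{G_{i,\alpha}\}$ in the sense of \Cref{def:filt} (via \cite[Proposition 3.3]{shl-am}, exactly as invoked in the proof of \Cref{le:sandwich}).

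Next I would build a filtration of $G$ directly from these two $H$-filtrations. For each pair $(\alpha,\beta)$ set $H_{\alpha}:=G_{1,\alpha}\cap H$ and $H_{\beta}:=G_{2,\beta}\cap H$; since both families of finite-index subgroups of $H$ have trivial intersection, so does the family $\{H_\alpha\cap H_\beta\}$. I would then try the candidate normal subgroups
\begin{equation*}
  N_{\alpha,\beta}:=\bigl\langle G_{1,\alpha},\, G_{2,\beta}\bigr\rangle\cap (\text{suitable index-controlled piece}),
\end{equation*}
or, more cleanly, push $G_{1,\alpha}$ and $G_{2,\beta}$ into $G$ and take the subgroup they generate together with a cofinite piece of $H$: because the two factors commute in $G$, the subgroup generated by the images of $G_{1,\alpha}$ and $G_{2,\beta}$ is a quotient of $G_{1,\alpha}\otimes_{H_\alpha\cap H_\beta}G_{2,\beta}$-type object and has finite index in $G$, namely $[G_1:G_{1,\alpha}]\cdot[G_2:G_{2,\beta}]$ up to the overlap in $H$. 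The key computation is that $\bigcap_{\alpha,\beta}N_{\alpha,\beta}$ is trivial: an element of $G$ lies in $N_{\alpha,\beta}$ for all $(\alpha,\beta)$ only if its $H$-component lies in $\bigcap H_\alpha\cap H_\beta=\{1\}$ and its images in $G_1/H$ and $G_2/H$ lie in $\bigcap_\alpha G_{1,\alpha}H/H=\{1\}$ and $\bigcap_\beta G_{2,\beta}H/H=\{1\}$ respectively — here the $H$-filtration property of each $G_i$ is used in the form $\bigcap_\alpha HG_{i,\alpha}=H$. Combining these three facts forces the element to be trivial.

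The step I expect to be the main obstacle is verifying that the $N_{\alpha,\beta}$ genuinely have finite index and are normal, and that the intersection argument is airtight: one has to be careful that ``the subgroup generated by $G_{1,\alpha}$ and $G_{2,\beta}$ inside $G$'' is computed correctly — $G_1$ and $G_2$ overlap in $H$ inside $G$, so $G_{1,\alpha}$ and $G_{2,\beta}$ overlap in $H_\alpha\cap H_\beta$, and one must confirm the resulting subgroup meets $H$ in exactly $H_\alpha\cdot H_\beta$ (not something larger), which is what makes the $H$-coordinate of the intersection collapse. An alternative, possibly cleaner route — and the one I would fall back on if the explicit filtration gets unwieldy — is to invoke \cite[Proposition 2]{baums} once more: observe $G=G_1\otimes_H G_2$ is a quotient of $G_1*_H G_2$, but also note that residual finiteness of a central extension $1\to H\to G\to Q\to 1$ with $H,Q$ RF holds as soon as $Q$ admits a filtration whose preimages in $G$ separate $H$, and the $H$-filtrations on the $G_i$ provide exactly such preimages on the product quotient $Q=(G_1/H)\times(G_2/H)$. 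Either way the conclusion is that $G$ is RF, which is what \Cref{le:otimes} asserts.
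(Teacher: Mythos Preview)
Your approach is essentially the paper's: push $H$-filtrations $\{G_{i,\alpha}\}$ of the $G_i$ into $G=G_1\otimes_H G_2$ as the images of $G_{1,\alpha}\times G_{2,\beta}$ under $G_1\times G_2\twoheadrightarrow G$, and check that these images form a filtration. The paper asserts this in a single sentence, while you spell out the verification and correctly flag the crux --- one slip to fix: early on you write that the ``$H$-component lies in $\bigcap H_\alpha\cap H_\beta$'', but as you yourself note later, $N_{\alpha,\beta}\cap H = H_\alpha\cdot H_\beta$ (product, not intersection), and it is $\bigcap_{\alpha,\beta} H_{\alpha}H_{\beta}=\{1\}$ that must be checked.
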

\begin{proof}
  If $\{G_{i,\alpha}\}$ are $H$-filtrations of $G_i$ respectively for $i=1,2$ then the images $G_{\alpha}$ of
  \begin{equation*}
    G_{1,\alpha}\times G_{2,\alpha}\le G_1\times G_2
  \end{equation*}
  through the surjection
  \begin{equation*}
    G_1\times G_2\to G=G_1\otimes_HG_2
  \end{equation*}
  identifying the two copies of $H$ will form an $H$-filtration for $G$.
\end{proof}


\def\polhk#1{\setbox0=\hbox{#1}{\ooalign{\hidewidth
  \lower1.5ex\hbox{`}\hidewidth\crcr\unhbox0}}}

\addcontentsline{toc}{section}{References}

\Addresses

\end{document}